\def\gp#1{\langle#1\rangle}
\newtheorem{theorem}{Theorem}
\newtheorem{lemma}{Lemma}
\title[Elements of high order]{Elements of high order in finite fields \\ specified by binomials}
\author[ Bovdi, Diene, Popovych]{Victor Bovdi, Adama Diene, Roman Popovych}
\address{
\textsl{\tiny V.~Bovdi}\newline
United Arab Emirates University, Al Ain, UAE}
\email{vbovdi@gmail.com}
\address{
\textsl{\tiny A.~Diene}\newline
United Arab Emirates University, Al Ain, UAE}
\email{adiene@uaeu.ac.ae}
\address{\textsl{\tiny R.~Popovych}\newline
Lviv Polytechnic National University,  Lviv,  Ukraine}
\email{rombp07@gmail.com}
\keywords{finite field, multiplicative order, binomial,  element of high multiplicative order}
\subjclass{11T30; 11Y99; 11T06; 20K01; 13M10}
\begin{document}

\maketitle

\begin{abstract}
Let $F_q$ be a field with $q$ elements, where  $q$ is a power of a  prime number  $p\geq 5$.
For any integer $m\geq 2$ and $a\in F_q^*$ such that the polynomial $x^m-a$ is irreducible in $F_q[x]$, we combine two different methods to construct explicitly elements of high order in the field $F_q[x]/\langle x^m-a\rangle $. Namely,   we find  elements with multiplicative order of at least $5^{\sqrt[3]{m/2}}$, which is better than previously  obtained bound for such family of extension fields.
\end{abstract}

\section*{Introduction}
The problem of efficiently constructing a primitive element for a given finite field is  notoriously difficult  computational task  of finite fields. That is why one considers a less restrictive question: namely  to find an element with "high" or "large" multiplicative order. Based on
\cite[Introduction, p.\,1615]{Gao}, by "large order" (high order, exponential order) of an element in the finite field $F_{q^{m}}$ of $q^{m}$ elements, we mean that the order of this element must be bigger than every polynomial in $\log (q^{m})$ when $q^{m} \rightarrow \infty$.  In general, we are not required to compute the exact order of such an element, but  it is sufficient  to obtain its lower bound.

High order elements in finite fields are very useful  in several applications, such as cryptography, coding theory, pseudo random number generation and combinatorics.

Gao  \cite{Gao} provided  an algorithm for constructing high order
elements for many (conjecturally all) general extensions $F_{q^{m}}$
of a finite field $F_{q}$ with lower bound  $\exp (\Omega((\log
m)^{2} / \log \log m))$ on the order. This bound was improved  in
\cite{Popovych_2014_b}. Voloch \cite{Voloch} proposed a method which
constructs an element of order of at least $\exp (\Omega((\log
m)^{2}))$. However, for some  classes of finite fields, it is
possible to construct elements of much   higher orders (for example,
see \cite{Ahmadi_Shparlinski_Voloch, Popovych_2012, Popovych_2014,
Popovych_Skuratovskii, Gathen_Shparlinski}). In these articles,
extensions   connected with cyclotomic polynomials  are considered
and elements of order  bounded by  $\exp (\Omega(\sqrt{m}))$ are
constructed. Note that this bound is much better than the ones   we
mentioned previously.

Some another classes of extensions based on the Kummer or Artin-Schreier polynomials were  considered in \cite{Brochero_Reis, Cheng, Popovych_2013}. The
best known lower bound of the order (see \cite[Theorem 1, p.\,87]{Popovych_2013})  for extensions, specified by Kummer polynomials, is
$2^{\lfloor\sqrt[3]{2 m}\rfloor}$, where $\lfloor\sqrt[3]{2 m}\rfloor$ is the highest integer less or equal to $\sqrt[3]{2 m}$.
In our current article we continue this line of investigation.

\section{Main  Results}

Let $p,q,m, n\in \mathbb{N}$ such that $p$ is an odd prime, $q=p^n$ and $m\geq 2$. Let $F_{q}$ be a finite field of $q$ elements and let  $a\in F_{q}^*$ such that  $x^{m}-a$ is an irreducible polynomial over $F_{q}$.  Let  $F_{q^{m}}=F_{q}(\theta)=F_{q}[x]/\gp{x^{m}-a}$ be a field extension of $F_{q}$ based on the irreducible  binomial (Kummer polynomial) $x^{m}-a$, where   $\gp{f(x)}$ is an ideal of $F_{q}[x]$ generated by  $f(x)\in F_{q}[x]$ and  $\theta$ is the coset of $x$ in $F_{q}[x]/\gp{x^{m}-a}$.

We widely use (see Lemma \ref{LEM:1}) the following fact from \cite{Panario_Thomson}. Let $F_{q}$ be a finite field of characteristic  $p \geq 5$. There exists infinitely many natural numbers $m$ and $a=a(m)\in F^*_{q}$, such that $x^m-a\in F_{q}[x]$ is  an irreducible polynomial. Such elements  $a=a(m)\in F^*_{q}$ are called {\it $m$-related}.

Our main results use  the fact   that the extension degree $m=k\cdot l$ is a product of two numbers, where $k>1$ is a divisor of $q-1$, and $l\geq 1$ is the order of the number $q$ modulo $m$. Using  a special representation of elements of the group $\gp{q \pmod{m}}$ (see \cite[Lemma 4, p.\,88]{Popovych_2013}) we deduce the following: if $q-1$ has a "large"  divisor $k$, we use for  the construction of the method similar to the case $F_{q}[x] /\gp{x^{m}-a}$ with the condition $q\equiv 1\pmod{m}$ or to the case $F_{p}[x] /\gp{x^{p}-x-a}$ (\cite[p.\, 363-365]{Cheng});
if $q-1$ does not have   a big divisor $k$, then $l=m / k$ is large, and we use for the construction of the method similar to the case $F_{q}[x] /\gp{x^{r-1}+\cdots+x+1}$ (see \cite{Ahmadi_Shparlinski_Voloch, Popovych_2012}). We take in both cases a linear binomial in some power of $\theta$ and all consecutive $q$-th powers of it (the so called conjugates), that also belong to the group generated by the binomial, and construct their distinct products.

In the first case, when $q \equiv 1\pmod{k}$, the conjugates of $\theta^{l}+b$ ($b\in F^*_{q}$) are linear binomials in $\theta^{l}$. The idea was introduced by Berrizbeitia \cite{Berrizbeitia} as an improvement of the AKS primality proving algorithm and developed by several authors (see \cite{Cheng,Popovych_2012} and also the survey  article \cite{Granville}). Our first result, which uses the first mentioned method, is the following.

\begin{theorem}\label{TEO:1}
Let $q=p^n$, where  $p\geq 5$ is a prime and let $m=k\cdot l\in \mathbb{N}$ such that  $k>1$ is a divisor of $q-1$, and $l\geq1$ is the multiplicative order of  $q \pmod{m}$.  Let $a\in F^*_{q}$ be an $m$-related element (i.e.  $x^{m}-a\in F_{q}[x]$ is  an irreducible polynomial)  and let the element $\theta$ define the field extension $F_{q}(\theta)=F_{q}[x] /\gp{x^{m}-a}$.

If  $b\in F^*_{q}$, then  the multiplicative order of $\theta^{l}+b\in F_{q}(\theta)$ is at least
\[
\mathfrak{d}_{1}:=\max _{0 \leq d_{-} \leq d<k}\Big\{\textstyle \binom{k}{d_{-}}\binom{d}{d_{-}}\binom{2k-d-d_{-}-1}{k-d-1}\Big\}.
\]

Moreover, if $k \geq 70$, then $\mathfrak{d}_{1} \geq 5^{k}$.
\end{theorem}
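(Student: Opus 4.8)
The plan is to bound the multiplicative order of $\alpha:=\theta^{l}+b$ from below by producing many distinct elements of the cyclic group $\gp{\alpha}$ as products of the Frobenius conjugates of $\alpha$, and then to estimate the resulting combinatorial quantity $\mathfrak d_{1}$. First I would pin down the conjugates. Writing $s=(q-1)/k$ and using $m=kl$, one has $lq=l(1+ks)\equiv l \pmod m$, whence $(\theta^{l})^{q}=\theta^{lq}=\theta^{l}(\theta^{m})^{s}=a^{s}\theta^{l}$. Put $\gamma:=a^{s}=a^{(q-1)/k}\in F_q^{*}$. Since $\gamma\in F_q$ we have $\gamma^{q}=\gamma$, so $(\theta^{l})^{q^{i}}=\gamma^{i}\theta^{l}$ follows by induction, and the conjugates of $\alpha$ are $\alpha^{q^{i}}=\gamma^{i}\theta^{l}+b$, each of which lies in $\gp{\alpha}$. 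The irreducibility of $x^{m}-a$ (via the standard criterion that every prime divisor of $m$ divides $\mathrm{ord}(a)$ but not $(q-1)/\mathrm{ord}(a)$) forces $\gamma$ to have order exactly $k$, so $\gamma^{0},\dots,\gamma^{k-1}$ are pairwise distinct. Setting $\beta:=\theta^{l}$ we also get $\beta^{k}=\theta^{m}=a$, and since $\beta$ has the $k$ distinct conjugates $\gamma^{i}\beta$, its minimal polynomial is $x^{k}-a$; thus $F_q(\beta)=F_{q^{k}}=F_q[x]/\gp{x^{k}-a}$.

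Next I would convert products of conjugates into a polynomial count. For $e_{i}\ge 0$ the element $\prod_{i=0}^{k-1}(\gamma^{i}\beta+b)^{e_{i}}$ lies in $\gp{\alpha}$ and, in $F_q[x]/\gp{x^{k}-a}$, equals $P_{\mathbf e}(x):=\prod_{i=0}^{k-1}(\gamma^{i}x+b)^{e_{i}}$ reduced modulo $x^{k}-a$. Because $b\neq 0$ and $\gamma$ has order $k$, the linear factors $\gamma^{i}x+b$ are pairwise non-associate irreducibles, each coprime to $x^{k}-a=\prod_{i}(x-\gamma^{i}\beta)$; hence by unique factorization in $F_q[x]$ the only collisions among the field elements $P_{\mathbf e}(\beta)$ arise from the folding $P\mapsto P\bmod(x^{k}-a)$. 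The order bound then reduces to exhibiting a family of exponent vectors, admitting total degrees up to roughly $2k-2$, on which this folding is injective; the two indices $d_{-}\le d$ record the degree data of the admitted products, and the three binomial factors count respectively the choice of support pattern (choosing $d_{-}$ positions among the $k$ conjugates), an internal matching of factors, and the weak compositions that fix the remaining coefficients of the folded polynomial. Optimizing over $(d,d_{-})$ yields $\mathrm{ord}(\alpha)\ge \mathfrak d_{1}$.

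The step I expect to be the main obstacle is precisely this injectivity-after-folding: one must verify that no two distinct admitted products $P_{\mathbf e},P_{\mathbf e'}$ differ by a multiple of $x^{k}-a$, so that the stated product of binomials genuinely lower-bounds the number of distinct group elements produced. This is where the degrees-of-freedom bookkeeping behind the three binomials must be made rigorous, since the naive degree-$<k$ argument only gives the weaker bound $\binom{2k-1}{k}\sim 4^{k}$, which is below the target $5^{k}$.

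Finally, for the estimate $\mathfrak d_{1}\ge 5^{k}$ I would choose $d$ and $d_{-}$ proportional to $k$. With the entropy approximation $\binom{n}{j}\approx 2^{nH(j/n)}$, the slice $d=d_{-}=\lfloor k/5\rfloor$ has $\binom{d}{d_{-}}=1$ and exponential rate $H(1/5)+2\cdot\tfrac{4}{5}=\log_{2}5$, so $5^{k}$ is exactly the leading order along this direction; moving the proportions slightly into the interior (for instance $d_{-}\approx k/4$, $d\approx 2k/5$) raises the rate strictly above $\log_{2}5$ and thereby produces exponential slack that dominates the polynomial Stirling corrections. Using explicit lower bounds of the form $\binom{n}{j}\ge 2^{nH(j/n)}/(n+1)$, one then checks that the slack overtakes these correction factors once $k\ge 70$, which gives $\mathfrak d_{1}\ge 5^{k}$ and fixes the stated threshold.
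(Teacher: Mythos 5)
Your setup of the conjugates is correct and matches the paper: with $h=(q-1)/k$ one gets $(\theta^l)^{q^i}=a^{hi}\theta^l+b$, and the problem reduces to counting distinct products of the $k$ linear binomials $a^{hi}\eta+b$ in the subfield $F_q[y]/\gp{y^k-a}$. But you have correctly identified, and then failed to close, the gap that is the whole point of the theorem: with only non-negative exponents and total degree below $k$ you get at most $\binom{2k-1}{k}\sim 4^k$ provably distinct products, and your hope that ``injectivity after folding'' might survive up to degree $2k-2$ is not what the paper does (and is not obviously true --- reduction modulo $y^k-a$ genuinely collapses products of degree $\geq k$). The missing idea is Voloch's trick of allowing \emph{negative} exponents: since each $a^{hi}\eta+b$ is a nonzero field element, $\prod_i(a^{hi}\eta+b)^{u_i}$ makes sense in the group $\gp{\eta+b}$ for $u_i\in\mathbb{Z}$. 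One takes the set $S$ of integer vectors with exactly $d_-$ negative entries, the absolute values of the negative entries summing to at most $d$, and the non-negative entries summing to at most $k-1-d$; its cardinality is exactly $\binom{k}{d_-}\binom{d}{d_-}\binom{2k-d-d_--1}{k-d-1}$ (choose the $d_-$ positions, then the negative values, then the non-negative values as a weak composition). If two such products coincide, moving all negative exponents to the opposite side produces two polynomials $f_1,f_2$ each of degree at most $(k-1-d)+d=k-1$, which must then be equal as polynomials (Lemma \ref{LEM:3}), and unique factorization of the pairwise non-associate linear factors forces the two vectors to coincide. Your proposed reading of the three binomial coefficients (``support pattern, internal matching, weak compositions fixing the folded coefficients'') does not correspond to any valid counting argument, so as written the proof does not establish the bound $\mathfrak{d}_1$.

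Your heuristic for the final estimate $\mathfrak{d}_1\geq 5^k$ is in the right spirit --- the diagonal $d=d_-=\lfloor k/5\rfloor$ does give exponential rate exactly $\log_2 5$, and the true optimum (rate $\log_2 5.7556$, from the cited reference \cite{Popovych_2015}) provides the slack needed to absorb polynomial corrections for $k\geq 70$ --- but this is secondary to the structural gap above.
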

\smallskip

Note that  $\mathfrak{d}_{1} \geq \frac{5,7556^{k}}{30 k^{3 / 2}}$ for $k \geq 8$  by \cite[Theorem 1, p.\, 23, Corollary 2, p.\, 25]{Popovych_2015}).  It is easy to see that our  lower bound is better for  $k \geq 70 $.

Hence, we derive a lower bound on the order of $\theta^{l}+b$, which depends on $k$.

\bigskip

In the second case, the conjugates of $\theta+b$ are non-linear polynomials in $\theta$. The idea was introduced by Gathen and Shparlinski for the fields based on cyclotomic polynomials \cite{Gathen_Shparlinski}, and developed in \cite{Ahmadi_Shparlinski_Voloch, Popovych_2012, Popovych_2014}.

Let
\begin{equation}\label{Equ:1}
\begin{split}
\mathbf{T}=\{(u_{0},  \ldots, u_{l-1})\in \mathbb{Z}^{l}\mid \quad \textstyle \sum_{i=0}^{l-1}(i &\cdot k+1) u_{i}<m,\quad
0 \leq u_{0}, \ldots, u_{l-1} \leq p-1
\}.
\end{split}
\end{equation}

Our second result, which uses the second mentioned above method, is the following.

\begin{theorem}\label{TEO:2}
Let $q=p^n$, where  $p\geq 5$ is a prime and let $m=k\cdot l\in \mathbb{N}$ such that  $k>1$ is a divisor of $q-1$, and $l\geq 1$ is the multiplicative  order of  $q \pmod{m}$.  Let $a\in F^*_{q}$ be an $m$-related element (i.e.  $x^{m}-a\in F_{q}[x]$ is  an irreducible polynomial)  and let the element $\theta$ define the field extension $F_{q}(\theta)=F_{q}[x] /\gp{x^{m}-a}$.

If  $b\in F^*_{q}$, then the multiplicative order of  $\theta+b\in F_{q}(\theta)$  is at least $\mathfrak{d}_{2}:=|T|\geq 5^{\sqrt{l / 2}}$.

Moreover:
\begin{itemize}

\item[(i)] if $l \geq p^{2}+1$, then
\[
\textstyle \mathfrak{d}_{2} \geq\left(\frac{p(p-1)}{160(l-1)}\right)^{\sqrt{p}} \exp \left(2.5\cdot \sqrt{(1-\frac{1}{p})(l-1)}\right);
\]

\item[(ii)] if $l<p+1$, then
\[
\textstyle  \mathfrak{d}_{2} \geq \frac{\exp \big(2.5 \cdot\sqrt{l-1}\big)}{13(l-1)}.
\]
\end{itemize}
\end{theorem}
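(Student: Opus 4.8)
The plan is to produce, inside the cyclic group $\gp{\theta+b}$, at least $|\mathbf{T}|$ distinct elements, each a product of the Frobenius conjugates of $\theta+b$; as every such product is a power of $\theta+b$, this forces $\mathrm{ord}(\theta+b)\ge|\mathbf{T}|=\mathfrak{d}_2$. To compute the conjugates, note that $b\in F_q$ gives $(\theta+b)^{q^i}=\theta^{q^i}+b$, and writing $q^i=e_i+m\lfloor q^i/m\rfloor$ with $e_i=q^i\bmod m$ and using $\theta^m=a\in F_q^*$ gives $\theta^{q^i}=c_i\theta^{e_i}$ with $c_i=a^{\lfloor q^i/m\rfloor}\in F_q^*$. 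Since $k\mid q-1$ we have $q\equiv1\pmod k$, so each $e_i\equiv1\pmod k$; as $l$ is the order of $q$ modulo $m=kl$, the $l$ distinct residues $e_0,\dots,e_{l-1}$ all lie in $[1,m)$ and are therefore exactly the numbers $1+kj$, $0\le j\le l-1$ (the special representation of $\gp{q\bmod m}$ from \cite{Popovych_2013}). Reindexing the conjugates by the value of their exponent---a relabelling that makes the bookkeeping agree with $\mathbf{T}$---each product $P_u:=\prod_{j=0}^{l-1}(\tilde c_j\theta^{1+kj}+b)^{u_j}$, with appropriate $\tilde c_j\in F_q^*$, is a power of $\theta+b$.

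Next I would prove that the $P_u$, $u\in\mathbf{T}$, are pairwise distinct, which gives $\mathrm{ord}(\theta+b)\ge|\mathbf{T}|$. For $u\in\mathbf{T}$ the expansion of $P_u$ has $\theta$-degree $D(u)=\sum_j(1+kj)u_j<m$, so $P_u$ is an honest polynomial of degree $<m$ in $\theta$ and is determined by its coordinates in the basis $1,\theta,\dots,\theta^{m-1}$. I would recover $u$ from $P_u$ from the top down. The maximum of $\sum_j(1+kj)t_j$ over $0\le t_j\le u_j$ is attained only at $t_j=u_j$, so $\deg P_u=D(u)$ with leading coefficient $\prod_j\tilde c_j^{u_j}$. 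For each $J$, the weight $1+kJ$ can be removed from the top only by either the step $u_J\mapsto u_J-1$ (which contributes $u_J\cdot b\,\tilde c_J^{-1}\prod_j\tilde c_j^{u_j}$, a nonzero constant times $u_J$) or by reductions among the indices $j<J$ alone, since $1+kj>1+kJ$ for $j>J$ excludes the higher indices. Hence the coefficient of $\theta^{D(u)-(1+kJ)}$ is a known nonzero multiple of $u_J$ plus a term depending only on $u_0,\dots,u_{J-1}$; because $0\le u_J\le p-1<p$, this determines $u_J$ modulo $p$ and hence exactly, and induction on $J$ recovers all of $u$. The hypotheses $u_j\le p-1$ and $D(u)<m$ are used decisively here, in the spirit of the second method of \cite{Gathen_Shparlinski,Ahmadi_Shparlinski_Voloch,Popovych_2012}.

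It remains to bound $|\mathbf{T}|$. For the universal bound I would use a box argument: take $s\approx\sqrt{l/2}$ and let $u_0,\dots,u_{s-1}$ range independently over $\{0,1,2,3,4\}$, all other coordinates being $0$. The weight budget is respected because $4\sum_{j<s}(1+kj)=4s+2ks(s-1)$ equals, up to lower-order terms, $2k\cdot(l/2)=kl=m$, so all $5^{s}$ such tuples lie in $\mathbf{T}$. This is precisely where $p\ge5$ enters, through $\{0,1,2,3,4\}\subseteq\{0,\dots,p-1\}$, and it yields $\mathfrak{d}_2\ge 5^{\sqrt{l/2}}$.

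For the sharper estimates I would pass to partitions. Dividing the defining inequality of $\mathbf{T}$ by $k$ shows the binding constraint is essentially $\sum_j j\,u_j\le l-1$, so $|\mathbf{T}|$ is comparable to $\sum_{n<l}P(n)$, where $P(n)$ counts the partitions of $n$ in which no part occurs $p$ or more times (the cap $u_j\le p-1$). When $l<p+1$ the cap is inactive, and the explicit Hardy--Ramanujan estimate for $P(n)$---whose exponential rate $\pi\sqrt{2/3}\approx2.565$ is responsible for the constant $2.5$---gives (ii). When $l\ge p^2+1$ the cap is active; by Glaisher's bijection such capped partitions correspond to partitions into parts indivisible by $p$, whose admissible parts have density $1-\tfrac1p$, producing the factor $(1-\tfrac1p)$ inside the exponential of (i). I expect the main obstacle to lie exactly here: converting the partition asymptotics into the stated fully explicit inequalities---controlling the multiplicity cap and the secondary term $\sum_j u_j$ dropped above---rather than in any conceptual step.
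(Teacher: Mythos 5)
Your overall strategy is the paper's: the same conjugates $a^{j}\theta^{ik+1}+b$ obtained from Lemma \ref{LEM:2}, the same products indexed by $\mathbf{T}$, injectivity of $u\mapsto P_u$ forced by the degree bound $<m$ together with the cap $0\le u_j\le p-1$, and the same box construction with digits $\{0,\dots,4\}$ for the bound $5^{\sqrt{l/2}}$ (the paper arrives at the base $5$ by maximizing $\tau^{\sqrt{2l/(\tau-1)}}$ over $\tau$; both your version and the paper's silently replace $\lfloor\sqrt{l/2}\rfloor$ by $\sqrt{l/2}$ in the exponent). Your injectivity argument is a legitimate variant rather than a departure: you recover $u_J$ top-down from the coefficient of $\theta^{D(u)-(1+kJ)}$, using that the only way to shed exactly the weight $1+kJ$ with an index $\ge J$ involved is the single step $t_J=u_J-1$, whereas the paper cancels common irreducible factors in $f_1=f_2$ (unique factorization) and inspects the lowest nonzero-degree term $(u_r-v_r)a^{j_r}b^{u_r-v_r-1}c\,x^{rk+1}$. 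Both arguments rest on the same two facts and are interchangeable.

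The one place you stop short is items (i)--(ii), and the obstacle you anticipate there does not actually arise. The paper does not need to estimate $\sum_{n<l}P(n)$; it exhibits a single injection: a partition of $l-1$ in which each part occurs at most $p-1$ times, encoded by letting $u_{i-1}$ be the multiplicity of the part $i$ (and $u_{l-1}=0$), satisfies $\sum_i(ik+1)u_i\le k\sum_i(i+1)u_i=k(l-1)<m$ and so lies in $\mathbf{T}$, whence $\mathfrak{d}_2\ge\mathfrak{u}(l-1,p-1)$. Glaisher's theorem (Lemma \ref{LEM:4}) rewrites this as $\mathfrak{q}(l-1,p)$, and Mar\'oti's bounds (Lemmas \ref{LEM:5} and \ref{LEM:6}) are already fully explicit, so they are quoted verbatim: Lemma \ref{LEM:5} needs $n_0\ge d_0^2$, i.e. $l-1\ge p^2$, which is precisely the hypothesis of (i), while in case (ii) the condition $l-1<p$ makes the multiplicity cap vacuous, so $\mathfrak{u}(l-1,p-1)=\mathfrak{u}(l-1)$ and Lemma \ref{LEM:6} applies. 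In short, the ``conversion to explicit inequalities'' is a citation, not a computation; your sketch of (i)--(ii) has the right ingredients but is incomplete as written.
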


Items (i) and (ii) of Theorem \ref{TEO:2}  are slightly    better comparatively with $5^{\sqrt{l /2}}$ lower bounds on $\mathfrak{d}_{2}$, but not so explicit and not for all values of $l$. To prove these items we use Lemmas \ref{LEM:4},\ref{LEM:5}, and \ref{LEM:6}.  We take $\pi \sqrt{2 / 3} \approx 2.5$ to simplify formulas in (i) and (ii).

Hence, we obtain lower bounds on the order of the element $\theta+b$ which depend on $l$ or on $l$ and $p$.

Our third result, which takes together our first and second results, is the following.

\begin{theorem}\label{TEO:3}
Let $q=p^n$, where  $p\geq 5$ is a prime and let $m=k\cdot l\in \mathbb{N}$ such that  $k>1$ is a divisor of $q-1$, and $l\geq1$ is the multiplicative order of  $q \pmod{m}$.
Let $a\in F^*_{q}$ be an $m$-related element (i.e.  $x^{m}-a\in F_{q}[x]$ is  an irreducible polynomial)  and let the element $\theta$ define the field extension $F_{q}(\theta)=F_{q}[x] /\gp{x^{m}-a}$.

It is always possible to construct explicitly  in the field $F_{q}(\theta)$ an element of which the multiplicative order is at least $\max \{\mathfrak{d}_{1},\mathfrak{d}_{2}\}$.

Moreover, if $k \geq 70$ then the multiplicative order is at least $5^{\sqrt[3]{m / 2}}$.
\end{theorem}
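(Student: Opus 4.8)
The plan is to combine Theorems~\ref{TEO:1} and \ref{TEO:2} by a case analysis on the size of the divisor $k$ of $q-1$ relative to $m$. The constructive part is immediate: both $\theta^{l}+b$ and $\theta+b$ are explicit elements of $F_{q}(\theta)$, so one simply computes (lower bounds for) their orders $\mathfrak{d}_{1}$ and $\mathfrak{d}_{2}$ and outputs whichever element has the larger guaranteed order. Thus the first assertion, that we can explicitly construct an element of order at least $\max\{\mathfrak{d}_{1},\mathfrak{d}_{2}\}$, follows directly from the two previous theorems with no further work.

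For the quantitative claim, I would fix the threshold $t:=\sqrt[3]{m/2}$ and split on whether $k\geq t$ or $k<t$. In the first regime, since $k\geq t=\sqrt[3]{m/2}\geq\sqrt[3]{70^{3}/\,?}$ — more carefully, the hypothesis $k\geq 70$ combined with $k\geq t$ puts us in the range where Theorem~\ref{TEO:1} gives $\mathfrak{d}_{1}\geq 5^{k}\geq 5^{t}=5^{\sqrt[3]{m/2}}$, so $\max\{\mathfrak{d}_{1},\mathfrak{d}_{2}\}\geq 5^{\sqrt[3]{m/2}}$ and we are done. In the complementary regime $k<t=\sqrt[3]{m/2}$, I would use $m=k\cdot l$ to deduce a lower bound on $l$: from $k<(m/2)^{1/3}$ we get $l=m/k>m/(m/2)^{1/3}=2^{1/3}m^{2/3}$, and hence $\sqrt{l/2}>\sqrt{2^{1/3}m^{2/3}/2}=2^{-1/3}m^{1/3}=(m/2)^{1/3}\cdot 2^{-2/3}$. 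Here Theorem~\ref{TEO:2} supplies $\mathfrak{d}_{2}\geq 5^{\sqrt{l/2}}$, and the task reduces to checking that $\sqrt{l/2}\geq\sqrt[3]{m/2}$, i.e. that the exponent from the second construction dominates the target exponent.

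The main obstacle is precisely this exponent comparison in the second regime: the crude bound above gives $\sqrt{l/2}>2^{-2/3}(m/2)^{1/3}$, and $2^{-2/3}<1$, so the naive estimate falls just short of $(m/2)^{1/3}$. I would therefore sharpen the case boundary — rather than splitting at $k=\sqrt[3]{m/2}$, I would choose the crossover point $k_{0}$ where the two exponent functions $k$ (from $\mathfrak{d}_{1}\geq 5^{k}$) and $\sqrt{l/2}=\sqrt{m/(2k)}$ (from $\mathfrak{d}_{2}\geq 5^{\sqrt{l/2}}$) coincide, namely where $k=\sqrt{m/(2k)}$, giving $k^{3}=m/2$, i.e. exactly $k=\sqrt[3]{m/2}$. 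At this balance point both exponents equal $\sqrt[3]{m/2}$, so for $k\geq\sqrt[3]{m/2}$ the first bound $5^{k}$ wins and for $k\leq\sqrt[3]{m/2}$ the second bound $5^{\sqrt{m/(2k)}}$ wins, and in either case the larger of the two exponents is at least $\sqrt[3]{m/2}$. The remaining care is to confirm that whenever the first construction is invoked we indeed have $k\geq 70$ so that the clean bound $\mathfrak{d}_{1}\geq 5^{k}$ from Theorem~\ref{TEO:1} applies; since the hypothesis of the present theorem is $k\geq 70$, and the case $k\geq\sqrt[3]{m/2}$ only strengthens this, the bound is available exactly where it is needed, which closes the argument.
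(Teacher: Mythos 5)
Your proposal is correct and follows essentially the same route as the paper: compare the two exponents $k$ and $\sqrt{l/2}$, whose product structure $k\cdot(l/2)=m/2$ forces $\max\{k,\sqrt{l/2}\}\geq\sqrt[3]{m/2}$, with the hypothesis $k\geq 70$ guaranteeing that the clean bound $\mathfrak{d}_{1}\geq 5^{k}$ is available whenever that branch is used. The only flaw is a transient arithmetic slip: in the regime $k<\sqrt[3]{m/2}$ you get $\sqrt{l/2}>\sqrt{2^{-2/3}m^{2/3}}=2^{-1/3}m^{1/3}=(m/2)^{1/3}$ exactly (not $(m/2)^{1/3}\cdot 2^{-2/3}$), so the ``naive'' split already succeeds and your subsequent crossover analysis simply re-derives the same threshold.
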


The best previously known lower bound on the order of elements for finite field extensions defined by a binomial is equal to $2^{\sqrt[3]{2 m}}=2,3948^{\sqrt[3]{m}}$ (see \cite[Theorem 1, p.\,87]{Popovych_2013}). Our Theorem \ref{TEO:3} gives a new  bound $5^{\sqrt[3]{m / 2}}=3,5873^{\sqrt[3]{m}}$. which is an improvement of $2^{\sqrt[3]{2 m}}$.

\section{Lemmas and proofs}

The multiplicative group  $F^*_{q^m}$ of the finite field $F_{q^m}$ is  cyclic of order $q^m-1$  with  $\varphi(q^{m}-1)$  generators which are   called primitive elements, where $\varphi$ is the Euler totient function. For an element $g$ of a group $G$, we denote by $\langle g \rangle$ the cyclic subgroup generated by $g$.

Let $c$ be a fixed  positive integer. A partition $\mathcal{P}(c)$ of  $c$ is a sequence of  non-negative integers $u_{1}, \ldots, u_{c}$ such that
\begin{equation}\label{Equ:2}
\textstyle
c=\sum_{j=1}^{c} j u_{j}.
\end{equation}
We define  the following three numbers
\begin{equation}\label{Equ:3}
\mathfrak{u}(c),\quad  \mathfrak{u}(c, d), \quad \mathfrak{q}(c, d)
\end{equation}
related to  some subsets of the set of all partitions given by \eqref{Equ:2}:
\begin{itemize}
\item[($P_1$)] $\mathfrak{u}(c)$ is the  number of all  partitions  $\mathcal{P}(c)$;
\item[($P_2$)] $\mathfrak{u}(c, d)$ is the  number of those   partitions  $\mathcal{P}(c)$ for which $u_{1}, \ldots, u_{c} \leq d$ (i.e. each part appears no more than $d$ times);
\item[($P_3$)] $\mathfrak{q}(c, d)$ is the  number of those   partitions  $\mathcal{P}(c)$ for which $u_{j}=0$ if $j \equiv 0 \pmod{d}$, (i.e. each part of which is not divisible by $d$).
\end{itemize}

In a finite field of characteristic two, the polynomial $x^m-a$ is irreducible if and  only if $m=1$.
For a finite field  of an odd characteristic the question when the polynomial $x^m-a$ is irreducible
was done by Panario and Thomson \cite{Panario_Thomson}. In the case $p=3$ the only possible extension is for $m=2$, namely the irreducible polynomial $x^2 - 2$. If $p \geq 5$, then we can construct the extensions for infinitely many $m$.

\begin{lemma}\label{LEM:1}(see \cite[Theorem 2, p.\,3]{Panario_Thomson})
Let $F_{q}$ be a finite field of characteristic  $p \geq 5$.

For $m \not \equiv 0\pmod 4$ there exists an irreducible binomial over $F_{q}$ of degree $m$ if and only if every prime
factor of $m$ is also a prime factor of $q-1$.

For $m \equiv 0\pmod 4$  there exists an
irreducible binomial over $F_{q}$ of degree $m$ if and only if $q \equiv 1\pmod 4$ and every prime
factor of $m$ is also a prime factor of $q-1$.
\end{lemma}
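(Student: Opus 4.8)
The plan is to reduce this existence statement to the classical irreducibility criterion for binomials over finite fields (see Lidl--Niederreiter, \emph{Finite Fields}, Theorem~3.75), which characterises irreducibility in terms of the multiplicative order of the constant term. That criterion states: for an integer $m\geq 2$ and $a\in F_q^*$ of multiplicative order $e$, the binomial $x^m-a$ is irreducible over $F_q$ if and only if (i) every prime factor of $m$ divides $e$ but does not divide $(q-1)/e$, and (ii) whenever $4\mid m$ one has $q\equiv 1\pmod 4$. Since condition (ii) depends only on $m$ and $q$ and not on $a$, the existence question reduces to deciding for which $m$ and $q$ some $a\in F_q^*$ can be chosen so that condition (i) holds.

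For necessity I would argue as follows. Suppose $x^m-a$ is irreducible for some $a$ of order $e$. By (i), each prime $r\mid m$ divides $e$; since $e\mid q-1$, this forces $r\mid q-1$, so every prime factor of $m$ divides $q-1$. If in addition $4\mid m$, then condition (ii) gives $q\equiv 1\pmod 4$ directly. This establishes the ``only if'' parts of both cases.

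For sufficiency, the key observation is that condition (i) is most easily satisfied by taking $a$ to be a primitive element of $F_q$, so that $e=q-1$ and hence $(q-1)/e=1$. The non-divisibility clause $r\nmid (q-1)/e$ then holds automatically for every prime $r$, and (i) collapses to the single requirement that every prime factor of $m$ divide $q-1$. Thus, assuming every prime factor of $m$ divides $q-1$ (together with $q\equiv 1\pmod 4$ when $4\mid m$, which secures (ii)), the binomial $x^m-a$ with $a$ primitive is irreducible, giving the ``if'' direction. Since $F_q^*$ is cyclic it always contains a primitive element, so this choice is available.

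The main obstacle is more a point requiring care than a genuine difficulty: the bookkeeping of condition (i) at the prime $2$ in the case $4\mid m$. There one must separate the genuinely extra hypothesis $q\equiv 1\pmod 4$ coming from (ii) from the divisibility condition $2\mid q-1$ coming from (i), and observe that the former subsumes the latter, since $q\equiv 1\pmod 4$ forces $4\mid q-1$ and hence $2\mid q-1$. Once this is disentangled, the two cases of the statement follow immediately from the reduction above.
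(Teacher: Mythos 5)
Your argument is correct. Note that the paper offers no proof of this lemma at all: it is quoted as an external result from Panario and Thomson \cite{Panario_Thomson}, so there is no internal argument to compare against. Your reduction to the classical binomial irreducibility criterion (Lidl--Niederreiter, Theorem 3.75) is the standard derivation and is essentially the argument underlying the cited reference. Both directions are handled soundly: necessity follows because the order $e$ of $a$ divides $q-1$, so each prime factor of $m$ dividing $e$ also divides $q-1$, and the $q\equiv 1\pmod 4$ clause is read off directly from condition (ii); sufficiency follows by taking $a$ primitive, so that $(q-1)/e=1$ and the non-divisibility clause of condition (i) becomes vacuous. Two small remarks. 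First, the hypothesis $p\geq 5$ plays no role in your argument and does not need to; it merely excludes the degenerate small-characteristic cases the authors discuss just before the lemma, and in any characteristic the condition that every prime factor of $m$ divide $q-1$ automatically rules out $p\mid m$, since $p\nmid q-1$. Second, it would be worth one explicit sentence confirming that a primitive element of $F_q^*$ exists (cyclicity of $F_q^*$), which you do mention; with that, the proof is complete.
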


Note that \cite{Panario_Thomson},  not only provides the possible degrees $m$ such that irreducible binomials $x^m-a$ exist,  but also provides a procedure   to construct the $m$-related elements $a=a(m)$.

\begin{lemma}\label{LEM:2} (see \cite[Lemma 4, p.\,88]{Popovych_2013})
Let  $m \geq 2$ and let $a\in F^*_{q}$ be an $m$-related element (i.e.  $x^{m}-a\in F_{q}[x]$ is  an irreducible polynomial).
If  $m=k\cdot l \in\mathbb{N}$, in which  $k$ is a divisor of $q-1$ and  $l$ is the order of $q$ modulo $m$,  then
$ \langle q\rangle\leq   \mathbb{Z}^{*}_{m}$ can be written as
\[
\langle q\rangle=\{\overline{i \cdot k+1} \mid i=0, \ldots, l-1\}.
\]
\end{lemma}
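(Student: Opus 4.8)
The plan is to prove the set equality by combining the two arithmetic conditions on $k$ and $l$ with a cardinality count. First I would record that $\langle q\rangle$ is a genuine cyclic subgroup of $\mathbb{Z}_m^*$ of order exactly $l$: this is simply the definition of $l$ as the multiplicative order of $q$ modulo $m$. Here $q$ really is a unit modulo $m$, since by Lemma~\ref{LEM:1} every prime factor of $m$ divides $q-1$, and because $q=p^n$ is coprime to the consecutive integer $q-1$, such primes cannot divide $q$; hence $\gcd(q,m)=1$.

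The key step is to show that every element of $\langle q\rangle$ is congruent to $1$ modulo $k$. Since $k\mid q-1$ we have $q\equiv 1\pmod k$, and therefore $q^i\equiv 1\pmod k$ for all $i\geq 0$. Thus each residue class in $\langle q\rangle$ has a representative of the form $jk+1$, so $\langle q\rangle$ is contained in the set $S=\{\,\overline{r}\in\mathbb{Z}_m \mid r\equiv 1\pmod k\,\}$; this set is well defined on classes modulo $m$ precisely because $k\mid m$.

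Next I would count $S$. Writing $m=kl$, the integers in $\{0,1,\dots,m-1\}$ congruent to $1$ modulo $k$ are exactly $1,\,k+1,\,2k+1,\dots,(l-1)k+1$, which are $l$ pairwise distinct residues modulo $m$ (the largest, $(l-1)k+1=m-k+1$, is still below $m$, while the next term $lk+1=m+1$ wraps back to $1$). Hence $|S|=l$ and $S=\{\overline{ik+1}\mid i=0,\dots,l-1\}$. Combining the inclusion $\langle q\rangle\subseteq S$ with the equality of cardinalities $|\langle q\rangle|=l=|S|$ forces $\langle q\rangle=S$, which is exactly the claimed description.

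I do not expect a serious obstacle: the argument is short, and the only point requiring care is the cardinality coincidence. One must verify both that the $l$ listed residues $ik+1$ are distinct modulo $m$ (immediate from $0\le i<l$ together with $kl=m$) and that no residue congruent to $1$ modulo $k$ has been omitted. The real content of the lemma is thus that the arithmetic-progression description of $S$ coincides with the cyclic subgroup $\langle q\rangle$, a coincidence that is guaranteed once the orders are matched.
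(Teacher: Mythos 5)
Your proof is correct, and it is the standard argument for this fact: the paper itself gives no proof, merely citing \cite[Lemma 4]{Popovych_2013}, where the same reasoning (the inclusion $\langle q\rangle\subseteq\{\overline{r}:r\equiv 1\pmod k\}$ from $k\mid q-1$, combined with the cardinality match $|\langle q\rangle|=l=m/k$) is used. Nothing is missing; the one point worth being careful about, the distinctness and exhaustiveness of the residues $ik+1$, is exactly the point you checked.
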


The next result below is a typical tool how to construct  high order elements (see \cite{Cheng, Gao, Handbook}).

\begin{lemma}\label{LEM:3}
Let  $m \geq 2$  and let $f(x)\in F_{q}[x]$ be an  irreducible polynomial  of degree $m$.
Let $g(x), h(x)\in F_{q}[x]$ such that  $g(x)\not=h(x)$.  If  $\deg(g(x))$ and $\deg(h(x))$ are less than  $m$, then
\[
g(x)+ \gp{f(x)}\not=g(x)+ \gp{f(x)}\in F_{q}[x] /\gp{f(x)}.
\]
\end{lemma}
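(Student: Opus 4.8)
The plan is to prove the (evidently intended) statement that distinct polynomials of degree less than $m=\deg f(x)$ represent distinct cosets in $F_{q}[x]/\gp{f(x)}$; the displayed formula as printed contains a typographical slip and should read $g(x)+\gp{f(x)}\neq h(x)+\gp{f(x)}$. First I would argue by contraposition: assume that the two cosets coincide, i.e.\ $g(x)+\gp{f(x)}=h(x)+\gp{f(x)}$. By the definition of cosets in the quotient ring this is equivalent to $g(x)-h(x)\in\gp{f(x)}$, that is, $f(x)$ divides $g(x)-h(x)$ in $F_{q}[x]$.

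The key step is then a degree count. Since $\deg g(x)<m$ and $\deg h(x)<m$, the difference satisfies $\deg\big(g(x)-h(x)\big)<m=\deg f(x)$. A nonzero multiple of $f(x)$ has degree at least $m$, so the only polynomial of degree strictly less than $m$ that is divisible by $f(x)$ is the zero polynomial. Hence $g(x)-h(x)=0$, i.e.\ $g(x)=h(x)$, contradicting the hypothesis $g(x)\neq h(x)$. This contradiction establishes that the cosets are distinct.

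I would remark that the argument uses only that $f(x)$ has degree $m$, not its irreducibility; equivalently, one may invoke uniqueness of the remainder in the Euclidean division by $f(x)$, since each coset contains exactly one representative of degree less than $m$. There is no genuine obstacle here: the lemma is the standard statement that the residues $\{\,g(x)\mid \deg g(x)<m\,\}$ form a complete and irredundant set of coset representatives for $F_{q}[x]/\gp{f(x)}$. Its role in the sequel is purely bookkeeping — it licenses the later strategy of proving that many explicitly constructed powers of $\theta^{l}+b$ (resp.\ $\theta+b$) are pairwise distinct by checking that their canonical degree-less-than-$m$ representatives differ, thereby forcing a large multiplicative order.
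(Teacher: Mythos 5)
Your proof is correct and is the standard argument; the paper itself gives no proof of this lemma, merely citing references, so there is nothing to diverge from. You rightly note the typographical slip in the statement (the conclusion should read $g(x)+\gp{f(x)}\neq h(x)+\gp{f(x)}$) and correctly observe that irreducibility of $f(x)$ is not needed --- only that $\deg f = m$ exceeds the degrees of $g$ and $h$, so that $f(x)\mid g(x)-h(x)$ forces $g(x)=h(x)$.
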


\begin{lemma}\label{LEM:4}(Glaisher, 1883, see \cite[Corollary 1.3, p.\,6.]{Andrews})
The number of partitions $\mathcal{P}(n_0)$ of $n_0\in\mathbb{N}$ not containing $d_0$ equal parts is equal to the number of partitions $\mathcal{P}(n_0)$ of $n_0$ with no part divisible by $d_0$, i.e.
\[
\mathfrak{u}(n_0, d_0-1)=\mathfrak{q}(n_0, d_0).
\]
\end{lemma}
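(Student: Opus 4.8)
The plan is to prove the identity by comparing generating functions, which is the cleanest route to this classical result of Glaisher. First I would encode each of the two counting functions as the coefficient of $x^{n_0}$ in an infinite product of formal power series in $x$. For $\mathfrak{u}(n_0,d_0-1)$, a partition in which every part appears at most $d_0-1$ times is recorded by choosing, independently for each part size $j\geq 1$, a multiplicity in $\{0,1,\ldots,d_0-1\}$; this gives the product $\prod_{j\geq 1}\bigl(1+x^{j}+\cdots+x^{(d_0-1)j}\bigr)$. For $\mathfrak{q}(n_0,d_0)$, a partition with no part divisible by $d_0$ is recorded by allowing arbitrary multiplicities only for those part sizes $j$ with $d_0\nmid j$, giving $\prod_{d_0\nmid j}\frac{1}{1-x^{j}}$.

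The key step is then a short algebraic simplification. Summing each finite geometric series, the first product becomes $\prod_{j\geq 1}\frac{1-x^{d_0 j}}{1-x^{j}}$; splitting the quotient, the numerator $\prod_{j\geq 1}(1-x^{d_0 j})$ is exactly $\prod_{d_0\mid i}(1-x^{i})$, so it cancels precisely the factors with $d_0\mid j$ in the denominator $\prod_{j\geq 1}(1-x^{j})$. What survives is $\prod_{d_0\nmid j}\frac{1}{1-x^{j}}$, which is the generating function for $\mathfrak{q}(n_0,d_0)$. Equating coefficients of $x^{n_0}$ then yields $\mathfrak{u}(n_0,d_0-1)=\mathfrak{q}(n_0,d_0)$.

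I expect essentially no serious obstacle here, since the two products telescope against one another; the only point requiring care is that all manipulations take place in the ring of formal power series, where these infinite products are well defined because the coefficient of $x^{n_0}$ depends on only finitely many factors. As an alternative, one could give a bijective proof via Glaisher's map: expand the multiplicity of each admissible part in base $d_0$ and redistribute the digits onto the parts $j\cdot d_0^{k}$. The harder part in that approach would be verifying that the map is well defined and invertible, so for the present purpose the generating-function argument is preferable.
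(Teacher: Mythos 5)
Your generating-function argument is correct: the product $\prod_{j\geq 1}\bigl(1+x^{j}+\cdots+x^{(d_0-1)j}\bigr)=\prod_{j\geq 1}\frac{1-x^{d_0 j}}{1-x^{j}}$ telescopes against the factors with $d_0\mid j$ to leave $\prod_{d_0\nmid j}\frac{1}{1-x^{j}}$, and your remark about working in the ring of formal power series handles the only delicate point. The paper itself gives no proof of this lemma --- it is quoted verbatim from Andrews with a citation --- and your argument is precisely the standard proof found in that reference, so there is nothing to reconcile.
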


\begin{lemma}\label{LEM:5}(\cite[Theorem 5.1]{Maroti})
For all integers $d_0 > 1$ and $n_0\geq d_0^2$, we have
\[
\textstyle (\frac{d_0(d_0-1)}{160 n_0})^{\sqrt{d_0}} \exp \Big(2.5 \cdot \sqrt{(1-\frac{1}{d_0}) n_0}\Big)<\mathfrak{q}(n_0, d_0).
\]
\end{lemma}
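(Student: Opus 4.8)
The plan is to pass to generating functions and locate the dominant coefficient by a saddle-point argument. First I would record, via Lemma \ref{LEM:4}, that $\mathfrak{q}(n_0,d_0)=\mathfrak{u}(n_0,d_0-1)$ is the number of partitions of $n_0$ in which every part occurs at most $d_0-1$ times; hence its generating function is
\[
Q(x):=\sum_{n\ge 0}\mathfrak{q}(n,d_0)\,x^{n}=\prod_{j\ge 1}\frac{1-x^{d_0 j}}{1-x^{j}}=\prod_{d_0\nmid j}\frac{1}{1-x^{j}},
\]
a power series with non-negative coefficients. The task is then to bound from below the single coefficient $[x^{n_0}]Q(x)$.

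Next I would analyse $\Phi(t):=\log Q(e^{-t})$ as $t\to 0^{+}$. Writing $\Phi(t)=\sum_{k\ge1}\frac1k\big(\frac{e^{-kt}}{1-e^{-kt}}-\frac{e^{-d_0kt}}{1-e^{-d_0kt}}\big)$ and comparing each summand with $\tfrac1{kt}(1-\tfrac1{d_0})\cdot\tfrac1k$ (by Euler--Maclaurin, or a Mellin transform using $\sum_{d_0\nmid j}j^{-z}=\zeta(z)(1-d_0^{-z})$) yields the leading behaviour
\[
\Phi(t)=\frac{\pi^{2}}{6}\Big(1-\frac1{d_0}\Big)\frac1t+O\!\big(\log\tfrac1t\big).
\]
Balancing $\Phi(t)+n_0 t$ gives the saddle $t_0=\pi\sqrt{(1-1/d_0)/(6n_0)}$ and the value $\min_{t>0}\big(\Phi(t)+n_0t\big)=\pi\sqrt{\tfrac23}\,\sqrt{(1-\tfrac1{d_0})n_0}$. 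Since $\pi\sqrt{2/3}\approx2.565>2.5$, the main exponential already exceeds the required $\exp\!\big(2.5\sqrt{(1-1/d_0)n_0}\big)$, and the surplus $\exp\!\big((\pi\sqrt{2/3}-2.5)\sqrt{(1-1/d_0)n_0}\big)$ is the reserve I would spend on the polynomial correction factor.

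Finally I would extract the coefficient rigorously. Writing $\mathfrak{q}(n_0,d_0)=\frac1{2\pi}\int_{-\pi}^{\pi}Q(e^{-t_0+i\varphi})e^{n_0(t_0-i\varphi)}\,d\varphi$, I would show that the major arc $|\varphi|\le\delta$ contributes the Gaussian main term $Q(e^{-t_0})e^{n_0t_0}/\sqrt{2\pi\,\Phi''(t_0)}$, whose prefactor $1/\sqrt{\Phi''(t_0)}$ is only polynomially small in $n_0$ (of order $n_0^{-3/4}$, up to factors depending on $d_0$). Since this genuine prefactor comfortably exceeds the crude quantity $(d_0(d_0-1)/160 n_0)^{\sqrt{d_0}}$ — which is at most $1$ once $n_0\ge d_0^2$ — the exponential surplus above absorbs it, and the hypothesis $n_0\ge d_0^2$ (equivalently $\sqrt{(1-1/d_0)n_0}\ge\sqrt{d_0(d_0-1)}$) is exactly what makes this surplus large enough uniformly. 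The main obstacle is the complementary estimate: bounding the minor arc $|\varphi|>\delta$, where one must prove $|Q(e^{-t_0+i\varphi})|$ is smaller than $Q(e^{-t_0})$ by more than the major-arc prefactor costs, uniformly in $d_0>1$ and $n_0\ge d_0^2$; this transfer of explicit constants, rather than the heuristic saddle computation, is the delicate part. An alternative, fully elementary route replaces the contour integral by truncating $Q$ to parts $j\le N$ and bounding the central coefficient of the resulting palindromic polynomial, at the cost of a more careful but entirely combinatorial accounting of the same exponent.
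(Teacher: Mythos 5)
This lemma is not proved in the paper at all: it is quoted verbatim as Theorem 5.1 of Mar\'oti's article \emph{On elementary lower bounds for the partition function}, so there is no internal proof to compare your argument against. What can be said is that Mar\'oti's own derivation is, as his title indicates, elementary and combinatorial (explicit inductive comparisons between restricted and unrestricted partition counts), whereas you propose a Hardy--Ramanujan--type circle-method argument. Your heuristic is sound as far as it goes: the generating-function identity via Glaisher, the asymptotic $\Phi(t)\sim\frac{\pi^2}{6}(1-\frac{1}{d_0})t^{-1}$, and the saddle value $\pi\sqrt{2/3}\,\sqrt{(1-1/d_0)n_0}$ with $\pi\sqrt{2/3}\approx 2.565>2.5$ are all correct and do identify the right exponential main term.

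The genuine gap is that everything which makes the statement a theorem rather than an asymptotic heuristic is deferred. The lemma is a fully explicit inequality, with the concrete constant $160$, the exponent $\sqrt{d_0}$, and validity for \emph{every} pair $d_0>1$, $n_0\ge d_0^2$ --- including the boundary case $n_0=d_0^2$, where your surplus $\exp\bigl(0.065\sqrt{(1-1/d_0)n_0}\bigr)$ is only about $e^{0.065\,d_0}$ and must absorb both the major-arc prefactor and all minor-arc and Euler--Maclaurin error terms with explicit, $d_0$-uniform constants. You yourself flag the minor-arc estimate as "the delicate part," but that estimate, together with the explicit bookkeeping of the $O(\log(1/t))$ correction to $\Phi$, \emph{is} the content of the result; without it the argument proves nothing quantitative. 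Extracting a clean lower bound with named constants from the circle method uniformly in a second parameter is notoriously laborious, which is presumably why Mar\'oti avoids it. If you want a self-contained proof, the elementary route you mention only in passing (or simply following \cite[Theorem 5.1]{Maroti}) is the realistic one; as written, your proposal is an accurate plan for locating the exponent $2.5$ but not a proof of the lemma.
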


\begin{lemma}\label{LEM:6}(\cite[Theorem 4.2]{Maroti})
For all integers $n_0 >1$
\[
\textstyle \frac{\exp \Big(2.5 \cdot \sqrt{n_0}\Big)}{13n_0}<\mathfrak{u}(n_0).
\]
\end{lemma}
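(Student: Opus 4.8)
The quantity $\mathfrak{u}(n_0)$ is the ordinary partition function, and the constant in the exponent is a rounded-down value of $\pi\sqrt{2/3}\approx 2.5650$, so the assertion is an explicit, slightly weakened form of the Hardy--Ramanujan asymptotic $\mathfrak{u}(n_0)\sim\frac{1}{4\sqrt{3}\,n_0}\exp(\pi\sqrt{2n_0/3})$ that must hold for \emph{every} integer $n_0>1$. The plan is therefore to prove the inequality for all $n_0$ beyond an explicit threshold $n_1$ by an analytic estimate, and then to verify the finitely many cases $2\le n_0\le n_1$ by direct computation of $\mathfrak{u}(n_0)$. One might first hope to deduce the bound from Lemma \ref{LEM:5} through the trivial inequality $\mathfrak{u}(n_0)\ge\mathfrak{q}(n_0,d_0)$, since partitions avoiding multiples of $d_0$ form a subfamily of all partitions; however, the admissible range $n_0\ge d_0^2$ forces $d_0\le\sqrt{n_0}$, and then the prefactor $(d_0(d_0-1)/160n_0)^{\sqrt{d_0}}$ decays like $\exp(-c\,n_0^{1/4})$, which overwhelms the target factor $1/(13n_0)$. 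Hence a direct argument for the full partition function is needed; this is precisely Mar\'oti's Theorem 4.2.

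The engine is the generating function $F(x)=\sum_{n\ge0}\mathfrak{u}(n)x^n=\prod_{k\ge1}(1-x^k)^{-1}$, evaluated at $x=e^{-t}$ with $t>0$. Expanding logarithms gives $\log F(e^{-t})=\sum_{m\ge1}\frac{1}{m(e^{mt}-1)}$. The elementary inequality $\frac{1}{e^{u}-1}<\frac1u$ for $u>0$ yields at once $\log F(e^{-t})<\sum_{m\ge1}\frac{1}{m^2 t}=\frac{\pi^2}{6t}$, while a truncated version of the same comparison gives the matching lower bound $\log F(e^{-t})\ge\frac{\pi^2}{6t}-O(\log(1/t))$. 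From $F(e^{-t})\ge\mathfrak{u}(n)e^{-tn}$ the first inequality produces the elementary \emph{upper} bound $\mathfrak{u}(n)<\exp(tn+\pi^2/6t)$, minimized at the saddle $t_0=\pi/\sqrt{6n}$ to give $\mathfrak{u}(n)<\exp(\pi\sqrt{2n/3})$. This crude upper bound is the tool I will recycle to control a tail in the lower-bound argument.

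For the lower bound I will exploit that $\mathfrak{u}$ is non-decreasing. Fix a small $\epsilon>0$ and set $t=(1+\epsilon)t_0$, slightly above the saddle. Splitting $F(e^{-t})=\sum_{j\le n}\mathfrak{u}(j)e^{-tj}+\sum_{j>n}\mathfrak{u}(j)e^{-tj}$, I bound the head by $\mathfrak{u}(n)\sum_{j\le n}e^{-tj}\le\mathfrak{u}(n)/(1-e^{-t})$ using monotonicity, and the tail by $\sum_{j>n}\exp(\pi\sqrt{2j/3}-tj)$ using the upper bound just proved. Since $t$ exceeds $t_0$, the summand of the tail is decreasing for $j\ge n$, and comparison with an integral shows the tail is at most $\tfrac12 F(e^{-t})$ once $n$ is large; the gain is that the exponent of the peak term, $(1-\epsilon)\pi\sqrt{n/6}$, falls short of $\log F(e^{-t})\ge\frac{1}{1+\epsilon}\pi\sqrt{n/6}-O(\log n)$ by a margin of order $\epsilon^2\sqrt{n}$. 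Absorbing the head then gives $\mathfrak{u}(n)\ge\tfrac12(1-e^{-t})F(e^{-t})\gg n^{-1/2}\exp\!\big(\tfrac{1}{1+\epsilon}\pi\sqrt{2n/3}\big)$, and choosing $\epsilon$ small enough that $\tfrac{1}{1+\epsilon}\pi\sqrt{2/3}>2.5$ makes the surplus $\exp\!\big((\tfrac{1}{1+\epsilon}\pi\sqrt{2/3}-2.5)\sqrt{n}\big)$ dominate the factor $13n/\sqrt{n}$ for all $n\ge n_1$.

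The main obstacle is entirely quantitative rather than conceptual: one must make the $O(\log(1/t))$ error in $\log F(e^{-t})$, the constant hidden in the tail-to-integral comparison, and the resulting crossover $n_1$ fully explicit, so that the clean exponent $2.5$ (chosen deliberately below $\pi\sqrt{2/3}$ to leave room for these losses) survives for every $n_0>n_1$. Once $n_1$ is pinned down, the remaining cases $2\le n_0\le n_1$ are settled by tabulating $\mathfrak{u}(n_0)$ and checking the inequality numerically, which completes the proof. This is the content of Mar\'oti's Theorem 4.2, which we invoke.
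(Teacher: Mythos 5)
The paper does not actually prove this statement: Lemma \ref{LEM:6} is quoted verbatim from Mar\'oti's paper, so its only ``proof'' is the citation. Your closing sentence (``which we invoke'') therefore matches the paper, and your preliminary observation that Lemma \ref{LEM:5} cannot be transferred to Lemma \ref{LEM:6} via $\mathfrak{u}(n_0)\ge\mathfrak{q}(n_0,d_0)$ is correct. The problem is the analytic sketch you present as the substance of Mar\'oti's argument: it contains a genuine quantitative error, and the method you describe cannot prove the stated bound.

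The flaw is a factor of two in the exponent. You correctly derive $\log F(e^{-t})\le \pi^2/(6t)$ and, at $t=(1+\epsilon)t_0$ with $t_0=\pi/\sqrt{6n}$, correctly record $\log F(e^{-t})\ge\frac{1}{1+\epsilon}\pi\sqrt{n/6}-O(\log n)$. But in the final step you silently replace $\pi\sqrt{n/6}$ by $\pi\sqrt{2n/3}=2\pi\sqrt{n/6}$, claiming $\mathfrak{u}(n)\ge\tfrac12(1-e^{-t})F(e^{-t})\gg n^{-1/2}\exp\bigl(\tfrac{1}{1+\epsilon}\pi\sqrt{2n/3}\bigr)$. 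What that inequality actually yields is $\exp\bigl(\tfrac{1}{1+\epsilon}\pi\sqrt{n/6}-O(\log n)\bigr)$, i.e.\ an exponent of at most $\pi/\sqrt{6}\approx 1.28$ per $\sqrt{n}$, not $2.5$. This is not a bookkeeping loss that explicit constants could repair: in the upper bound $\mathfrak{u}(n)<\exp\bigl(tn+\pi^2/(6t)\bigr)$ the two summands $tn$ and $\pi^2/(6t)$ contribute equally ($\pi\sqrt{n/6}$ each) at the saddle, and your head/tail argument recovers only the $F$-half, because the bound $\mathfrak{u}(n)\ge\tfrac12(1-e^{-t})F(e^{-t})$ carries no compensating factor $e^{tn}$. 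Moreover the tail condition forces $t\ge t_0$ (otherwise the mass of $F(e^{-t})$ sits at $j^*=\pi^2/(6t^2)>n$ and the tail exceeds half the sum), so even with perfect control of all error terms the method is capped at roughly $\exp(\pi\sqrt{n/6})\approx\exp(1.29\sqrt{n})$, which is already smaller than $\exp(2.5\sqrt{n})/(13n)$ for all $n\ge 25$; no choice of threshold $n_1$ plus finite checking can close this. The elementary arguments that do reach the Hardy--Ramanujan exponent (Erd\H{o}s's method, which Mar\'oti's paper refines) proceed instead by induction on the convolution identity $n\,\mathfrak{u}(n)=\sum_{m=1}^{n}\sigma(m)\,\mathfrak{u}(n-m)$, a mechanism entirely absent from your sketch. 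In short: as a citation your proposal agrees with the paper; as a proof it has a genuine gap.
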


\begin{proof} [\underline{Proof of Theorem \ref{TEO:1}}]
Since $k$ is a divisor of $q-1$, then, according to Lemma \ref{LEM:1}, the binomial $y^{k}-a$ is irreducible over $F_{q}[y]$. Set $\eta=\theta^{l}$. Clearly,  $\eta^{k}=\theta^{m}=a$,   and  $F_{q}(\eta)=F_{q}[y] /\gp{y^{k}-a}$ is a subfield of $F_{q}(\theta)$.

Consider $\eta+b$ (a linear binomial in the power of $\eta=\theta^{l}$) and consequential $q$-th powers (conjugates) of it, that belong to the group generated by this binomial. Write $q-1=k h$ for some integer $h$. Then $\eta^{q}+b=(\eta^{k})^{h} \eta+b=a^{h} \eta+b$ and the conjugates of $\eta+b$ are equal to
\[
(\eta+b)^{q^{i}}=a^{h i} \eta+b \qquad\qquad  (i=0, \ldots, k-1).
\]
Consider the subgroup $H=\gp{a^{h i} \eta+b\mid i=0, \ldots, k-1} \leq \gp{\eta+b} \leq F_{q}^*(\eta)$.
For a vector $\alpha=(u_{0},\ldots, u_{k-1})\in \mathbb{Z}^k$, we define the  product
\begin{equation}\label{Equ:4}
\mathrm{P}(\alpha)=\prod_{i=0}^{k-1}(a^{h i} \eta+b)^{u_{i}}\in H.
\end{equation}

The next  combinatorial problem  was introduced by Voloch in order to improve the AKS primality proving algorithm and this method has been developed by several authors (see surveys \cite[p.\,31--32]{Granville} and \cite{Popovych_2015}).

This   problem consists of finding for a fix  $k\in \mathbb{N}$ two non-negative integers  $0 \leq d_{-} \leq d<k$  with maximal possible value of  the    product $\binom{k}{d_{-}}\binom{d}{d_{-}}\binom{k-d_{-}-d-1}{k-d-1}$ of the following three binomial coefficients $\binom{k}{d_{-}}, \binom{d}{d_{-}}$ and $\binom{k-d_{-}-d-1}{k-d-1}$.

It is easy to check that  this product $\binom{k}{d_{-}}\binom{d}{d_{-}}\binom{k-d_{-}-d-1}{k-d-1}$ is the cardinality of  the  set  $S=\{ (u_{0}, u_{1}, \ldots, u_{k-1})\in\mathbb{Z}^k\}$ with the following properties:
\begin{itemize}
\item[(i)] the number of negative components equals $d_{-}$;
\item[(ii)] the sum of absolute values of negative components $\sum_{i, u_{i}<0}\left|u_{i}\right| \leq d$;
\item[(iii)] the sum of positive components $\sum_{i, u_{i} \geq 0} u_{i} \leq k-1-d$,
\end{itemize}
in which  $0 \leq d_{-} \leq d<k$.

Indeed, to specify the element of this set, we choose at first places, where vector values are negative: this takes into account the factor $\binom{k}{d_{-}}$. Then we choose values of  negative elements so that the sum of their absolute values does not exceed $d$: this takes into account the factor $\binom{d}{d_{-}}$. Finally,  we choose  non-negative vector values at $k-d_{-}$ places, so that their sum does not exceed $k-1-d$: this takes into account the factor $\binom{k-d_{-}+k-1-d}{k-1-d}$.

For each  $(u_{0}, u_{1}, \ldots, u_{k-1})\in S$ we consider the product \eqref{Equ:4} and  claim that two different vectors $(u_{0}, u_{1}, \ldots, u_{k-1})$ and $(v_{0}, v_{1}, \ldots, v_{k-1})$ from  $S$ give different values of $P$.

Let   $\alpha=(u_{0},\ldots, u_{k-1}),\beta=(v_{0},\ldots, v_{k-1})\in S$ such that $\alpha\not=\beta$ and $P(\alpha)=P(\beta)$ (see \eqref{Equ:4}).
Since  $y^{k}-a\in F_q[y]$ is the characteristic polynomial of $\eta$, then
\[
\prod_{i=0}^{k-1}(a^{h i} y+b)^{u_{i}}\equiv \prod_{i=0}^{k-1}(a^{h i} y+b)^{v_{i}}\pmod {\gp{y^{k}-a}}
\]
and, as a consequence,
\begin{equation}\label{Equ:5}
\begin{split}
    f_1(y)\equiv f_2(y)\pmod{\gp{y^{k}-a}},
\end{split}
\end{equation}
where
\[
\begin{split}
f_1(y):&=  \prod_{\substack{0 \leq i \leq k-1, \\0\leq u_i }}(a^{h i} y+b)^{u_{i}} \prod_{\substack{0 \leq i \leq k-1,\\ 0> v_i}}(a^{h i} y+b)^{|v_{i}|};\\
f_2(y):&=\prod_{\substack{0 \leq i \leq k-1, \\u_i<0}}(a^{h i} y+b)^{|u_{i}|} \prod_{\substack{0 \leq i \leq k-1,\\ v_i \geq 0}}(a^{h i} y+b)^{v_{i}}.
\end{split}
\]
Using \eqref{Equ:5}  and the  facts that
\[
\begin{split}
\deg(f_1(y))&=\sum_{\substack{0 \leq i \leq k-1, \\u_i \geq 0}} u_i+\sum_{\substack{0 \leq i \leq k-1,\\ v_i<0}}|v_i| \leq(k-1-d)+d=k-1,\\
\deg(f_2(y))&=\sum_{\substack{0 \leq i \leq k-1, \\ u_i \leq 0}}|u_i|+\sum_{\substack{0 \leq i \leq k-1, \\ v_i \geq  0}} v_i \leq d+(k-1-d)=k-1,
\end{split}
\]
we conclude  that $f_1(y)= f_2(y)$ from    Lemma \ref{LEM:3}.

Moreover, each factor  $a^{hi} y+b$ in $f_1(y)$ ($=f_2(y)$)  is irreducible and $a^{hi} y\not=a^{hj} y$ for $i\not=j$.
Since  $F_{q}[y]$ is a unique factorization ring, we obtain a contradiction.

Hence, the number of $\alpha \in S$ such that $P(\alpha)\in H$ (see \eqref{Equ:4}) is equal to the cardinality of $S$. We choose $d_{-}$ and  $d$ to obtain maximum of elements in $S$. As a result, $\eta+b$ has the multiplicative order at least $\mathfrak{d}_{1}$.
\end{proof}

\begin{proof}[\underline{Proof of Theorem \ref{TEO:2}}]
According to Lemma \ref{LEM:2}, for each $z \in\{0, \ldots, l-1\}$  there exist  unique $i\in\{0, \ldots, l-1\}$ and $j=j({i})\in\mathbb{Z}$, such that   $q^{z} = (i \cdot k+1)+j \cdot m$. Then conjugates of element $\theta+b$ are equal to
\[
(\theta+b)^{q^{z}}=\theta^{q^{z}}+b=(\theta^{m})^{j}\theta^{i \cdot k+1}+b=a^{j} \theta^{i \cdot k+1}+b\in \gp{\theta+b}.
\]
Similarly, as in the proof of Theorem \ref{TEO:1} (see also \eqref{Equ:1} before Theorem \ref{TEO:2}), for a vector $\alpha=(u_{0}, \ldots, u_{l-1})\in\mathbf{T}$  we define the  product
%\begin{equation}\label{Equ:6}
\[
\textstyle
\mathrm{P}(\alpha)=\prod_{i=0}^{l-1} (a^{j}\theta^{i k+1}+b)^{u_{i}}\in \gp{\theta+b}.
\]
%\end{equation}
We claim that if $\beta=(v_{0},\ldots, v_{l-1})\in\mathbf{T}$ is  distinct from $\alpha\in\mathbf{T}$, then $P(\alpha)\not=P(\beta)$.

Indeed, let $P(\alpha)=P(\beta)$. Set
\[
f_1(x):= \prod_{i=0}^{l-1}(a^{j} x^{i \cdot k+1}+b)^{u_{i}}\in F_q[x]\quad \text{ and }\quad f_2(x):=\prod_{i=0}^{l-1}(a^{j} x^{i\cdot k+1}+b)^{v_{i}}\in F_q[x].
\]
Clearly, $\deg(f_1(x))=\sum_{i=0}^{1-1}(i k+1) u_{i}<m$ and $\deg(f_1(x))=\sum_{i=0}^{1-1}(i k+1) u_{i}<m$.
Since  $x^{m}-a$ is the characteristic polynomial of $\theta$, then
%\begin{equation}\label{URAVNENIE:5}
\[
\begin{split}
    f_1(x)\equiv f_2(x)\pmod{\gp{x^{m}-a}},
\end{split}
\]
%\end{equation}
so $f_1(x)= f_2(x)$ by Lemma \ref{LEM:3}.

Note that $F_{q}[x]$ is a unique factorization ring. Let $r$ be the smallest integer for which $u_{r} \neq v_{r}$ and, say $u_{r}>v_{r}$, in which $u_{i}\in\alpha$ and $v_{i}\in \beta$. After removing common factors on both sides of the equation $f_1(x)= f_2(x)$, we observe  that
\begin{equation}\label{Equ:6}
(a^{j_{r}} x^{r\cdot k+1}+b)^{u_{r}-v_{r}} \prod_{i \geq r+1}^{l-1}(a^{j_{i}} x^{i \cdot k+1}+b)^{u_{i}}=\prod_{i \geq r+1}^{l-1}(a^{j_{i}} x^{i r+1}+b)^{v_{i}}.
\end{equation}
The absolute term for the polynomial $\prod_{i \geq r+1}^{l-1}(a^{j_{i}} x^{i\cdot k+1}+b)^{u_{i}}$ we denote by $c$. Then there is the term $(u_{r}-v_{r}) a^{j_{r}} b^{u_{r}-v_{r}-1} c x^{r\cdot k+1}$ in the polynomial on the left side of \eqref{Equ:6}  with the minimal non-zero power of $x$. Since $0 \leq u_{r}, v_{r} \leq p-1, u_{r} \neq v_{r}, a, b, c \neq 0$, the term is non-zero. This  term does not occur on the right side, which makes the identity \eqref{Equ:6} impossible. Hence, products, corresponding to distinct solutions, cannot be equal. Consequently, the multiplicative order of $\theta+b$ in $F_{q}(\theta)=F_{q}[x] /\gp{x^{m}-a}$  is at least $\mathfrak{d}_{2}:=|T|$.

Let $\tau\in [2, p-1]$ be an integer. Let us choose the largest integer   $\alpha>0$ such that
\[
\sum_{i=0}^{\alpha}(i \cdot k+1)(\tau-1)<m, \qquad\qquad  (k \geq 2).
\]
Obviously,
\[
\sum_{i=0}^{\alpha}(i \cdot k+1)(\tau-1)=\textstyle \frac{(\tau-1)(\alpha k+2)(\alpha+1)}{2}<\frac{(\tau-1) k(\alpha+1)^{2}} {2}.
\]
If  $\alpha:=\lfloor\sqrt{\frac{2 l}{\tau-1}}-1\rfloor$, then $(\tau-1) k(\alpha+1)^{2}\leq 2 m$ and for integers
\[
u_{i} \in\begin{cases}[0, \tau-1] & \text{for \qquad } i=0, \ldots, \alpha;\\
0&\text{for \qquad } i=\alpha+1, \ldots, l-1,\\
\end{cases}
\]
the  vector $(u_0,\ldots,u_{l-1})$ belongs to the  set $\mathrm{T}$. The number of such vectors is $\tau^{\alpha+1}\leq \tau ^{\sqrt{\frac{2 l}{\tau-1}}}$.
To choose $\tau$ we investigate  the maximum value of the following function:
\[
\textstyle
f(\tau):=\tau^{ \sqrt{\frac{2 l}{\tau-1}}}=\exp \Big\{ \sqrt{\frac{2 l}{\tau-1}} \cdot \ln (\tau)\Big\}, \qquad (2 \leq \tau \leq p-1).
\]
Obviously, $f^{\prime}(\tau)= \tau^{\sqrt{\frac{2 l}{\tau-1}}} \cdot \Big(\frac{2 l}{\tau-1}\Big)^{\frac{1}{2}} \cdot\Big(\frac{1}{\tau}-\frac{\ln \tau}{2(\tau-1)}\Big)$ and our function $f(\tau)$ reaches  the  maximum value at the point $\tau_{0}\in (4,92155, 4,921555)$.  Moreover,  $f(\tau)$ monotonically decreases for $\tau \geq 5\geq \lceil \tau_{0}\rceil$, so $\mathfrak{d}_{2}=|\mathrm{T}|\geq 5^{\sqrt{l/ 2}}$.

(i) Let us show that    $\mathfrak{d}_{2} \geq \mathfrak{u}(l-1, p-1)$ (see \eqref{Equ:3} and ($P_2$)). Indeed $i k+1<k(i+1)$, so
\begin{equation}\label{Equ:7}
\textstyle
\sum_{i=0}^{l-1}(ki+1) u_{i}<k \sum_{i=0}^{l-1}(i+1) u_{i}<m,
\end{equation}
and\qquad   $\sum_{i=1}^{l}i u_{i-1}<\textstyle\frac{m}{k}=l$.

If  $u_{l-1}\not=0$ we obtain a contradiction. Hence  $u_{l-1}=0$ and $\sum_{i=1}^{l-1} i u_{i-1}=l-1$, so  $(u_0,\ldots,u_{l-2})$ is a partition of $l-1$ (see \eqref{Equ:2}) such that  $0 \leq u_{0}, \ldots, u_{l-2} \leq p-1$ and \eqref{Equ:7} holds.

Explicit lower bounds on $\mathfrak{q}(n_0, d_0)$ for $n \geq d_0^{2}$ and on $\mathfrak{u}(n_0)$ for all $n_0\in\mathbb{Z}$ are  given in \cite{Maroti}. Note that $\mathfrak{u}(n_0, d_0-1)=\mathfrak{u}(n_0)$ for  $n_0<d_0$. Using   Lemmas \ref{LEM:4} and \ref{LEM:5} for $n_0:=l-1$ and $d_0:=p-1$ we obtain  that
\[
\begin{split}
\textstyle \mathfrak{d}_{2} \geq \mathfrak{u}(l-1, p-1)&=\mathfrak{q}(l-1, p)\\
&>\textstyle \big(\frac{p(p-1)}{160(l-1)}\big)^{\sqrt{p}} \textstyle  \exp\Big(2.5 \cdot \textstyle \sqrt{(1-\frac{1}{p})(l-1)}\Big).
\end{split}
\]
(ii) Recall that if $n_0<d_0$, then $\mathfrak{u}(n_0, d_0-1)=\mathfrak{u}(n_0)$. Consequently
\[
\begin{split}
\textstyle \mathfrak{d}_{2} \geq \mathfrak{u}(l-1, p-1)&=\mathfrak{u}(l-1)>\textstyle  \frac{\exp \big(2.5 \cdot \sqrt{l-1}\big)}{13(l-1)},
\end{split}
\]
where $n_0=l-1$ and $d_0=p$ by   Lemmas \ref{LEM:5} and \ref{LEM:6}.
\end{proof}

\begin{proof}[\underline{Proof of Corollary \ref{TEO:3}}] Elements   $\theta^{l}+b, \theta+b\in F_{q}(\theta)$ are different and  their  orders are  at least $\mathfrak{d}_{1}$ and  $\mathfrak{d}_{2}$, respectively by Theorems \ref{TEO:1} and \ref{TEO:2}.

If ${k} \leq {\sqrt{l / 2}}$, then  the order of  $\theta+b$ has  a lower bound $5^{\sqrt{l / 2}}$  by  Theorem  \ref{TEO:1}. If ${k} > {\sqrt{l / 2}}$, then we construct  the element $\gamma=\theta^{m_{2}}+b$ with lower bound $5^{k}$ on its order by Theorem  \ref{TEO:2}. Hence, one can explicitly construct in the field $F_{q}[x] /\gp{x^{m}-a}$ an element with the multiplicative order of at least
$\max\{5^{k}, 5^{\sqrt{l / 2}}\}$. In the worst case these lower bounds are equal: $5^{k}=5^{\sqrt{l / 2}}$. Then $k=\sqrt[3]{m^{2} / 2}$ and the order is at least $5^{\sqrt[3]{m / 2}}$.
\end{proof}

Note that the two considered methods have two parts: the algebraic part and the combinatorial calculation. An improvement in either (or both) of these parts results in an improvement in the evaluation of the method and then in the approach.
Generalizing the approach to other classes of finite fields is an open problem.

\section*{Acknowledgement}
\noindent
The work was supported by the UAEU UPAR grant  G00003431.

\end{document}